\makeatletter \@addtoreset{equation}{section}
\newtheorem{prop}{Proposition}[section]
\newtheorem{thm}[prop]{Theorem}
\newtheorem{defn}{Definition}[section]
\newtheorem{examp}{Example}[section]
\newtheorem{rem}{Remark}[section]
\begin{document}

\baselineskip 5true mm
\begin{center}
 {\Large\bf  On a type of semi-sub-Riemannian connection on a sub-Riemannian manifold}
\end{center}
\centerline{ Yanling Han \footnote{Supported by a Grant-in-Aid for
Scientific  Research from Nanjing University of Science and
Technology (KN11008). } }
\begin{center}
{\scriptsize  Dept. of Applied Mathematics, Nanjing  University of Science and Technology, Nanjing 210094, P. R. China\\
School of Science, Shandong Polytechnic University, Jinan 250353,
P.R.China\\
 E-mail: hanyanling1979@163.com}
\end{center}
\centerline{ Peibiao Zhao \footnote{Supported by a Grant-in-Aid for
Science Research from Nanjing University of Science and Technology
(2011YBXM120), by NUST Research Funding No.CXZZ11-0258,AD20370 and
by NNSF (11071119). }}
\begin{center}
{\scriptsize  Dept. of Applied Mathematics, Nanjing  University of Science and Technology, Nanjing 210094, P. R. China\\
E-mail: pbzhao@njust.edu.cn}
\end{center}

\begin{center}
\begin{minipage}{128mm}
{\footnotesize {\bf Abstract}   The authors first in this paper
define a semi-symmetric metric non-holonomic connection (called in
briefly a semi-sub-Riemannian connection) on sub-Riemannian
manifolds, and study the relations between sub-Riemannian
connections and semi-sub-Riemannian connections. An invariant under
a connection transformation $\nabla\rightarrow D$ is obtained. The
authors then further deduce a sufficient and necessary condition
that a sub-Riemannian manifold associated with a semi-sub-Riemannian
connection is flat, and derive that a sub-Riemannian manifold with
vanishing curvature with respect to semi-sub-Riemannnian connection
$D$ is a group manifold
if and only if it is of constant curvature.\\
{\bf Keywords} Sub-Riemmannian manifolds; Semi-sub-Riemannian
connections; Schouten curvature tensors
 \\
{\bf MR(2000)} 53C20, 53D11.}
\end{minipage}
\end{center}

\section{ Introduction}

 The study of transformation in Riemannian geometry has experienced a long
 time. In 1924, A. Fridmann and J. A. Schouten \cite{FS2} first introduced
the concept of a semi-symmetric linear connection in a differential
manifold, namely, a linear connection $\tilde{\nabla}$ is said to be
a semi-symmetric connection if its torsion tensor $\tilde{T}$ is of
the form
\begin{eqnarray*}
\tilde{T}(X,Y)=\pi(Y)X-\pi(X)Y, \forall X,Y\in\Gamma(TM),
\end{eqnarray*}
where $\pi$ is of 1-form associated with vector P on M, and P is
defined by $g(X,P)=\pi(X)$. In 1970, K. Yano \cite{Ya} considered a
semi-symmetric metric connection (that means a linear connection is
both metric and semi-symmetric) on a Riemannian manifold and studied
some of its properties. He pointed out that a Riemannian manifold is
conformal flat if and only if it admits a semi-symmetric metric
connection whose curvature tensor vanishes identically. He also
proved that a Riemannian manifold is of constant curvature if and
only if it admits a semi-symmetric metric connection for which the
manifold is a group manifold, where a group manifold is a
differential manifold admitting a linear connection $\tilde{\nabla}$
such that its curvature tensor $\tilde{R}$ vanishes and the
covariant derivative of torsion tensor $\tilde{T}$ with respect to
$\tilde{\nabla}$ is vanishing. Liang in his paper \cite{Li}
discussed some properties of semi-symmetric metric connections and
proved that the projective curvature tensor with respect to
semi-symmetric metric connections coincides with the projective
curvature tensor with respect to Levi-civita connection if and only
if the characteristic vector is proportional to a Riemannian metric.
The authors \cite{ZSY} introduced the concept of the projective
semi-symmetric metric connection, found an invariant under the
transformation of projective semi-symmetric connections and
indicated that this invariant could degenerate into the Weyl
projective curvature tensor under certain conditions, so the Weyl
projective curvature tensor is an invariant as for the
transformation of the special projective semi-symmetric connection.
For the study of semi-symmetric metric connections, the authors have
other interesting results \cite{FYZ,Zh1,Zh2,Zh3,Zh4,ZL}. Recently,
the authors in paper \cite{ZJ} even studied the theory of
transformations on Carnot Caratheodory spaces, and obtained the
conformal invariants and projective invariants on
Carnot-Caratheodory spaces with the view of Felix Klein.

In 1990, N. S. Agache and M. R. Chafle \cite{AC}  discussed a
semi-symmetric non-metric connection on a Riemannian manifold. A
semi-symmetric connection $\tilde{\nabla}$ is said to be a
semi-symmetric non-metric connection if it satisfies the conditions:
\begin{eqnarray*}
\tilde{\nabla}_{X}Y=\nabla_{X}Y+\pi(Y)X+g(X,Y)P,~\forall X, Y,
Z\in\Gamma(TM),
\end{eqnarray*}
\begin{eqnarray*}
\tilde{\nabla}_{Z}g(X,Y)=-2\pi(X)g(Y,Z)-2\pi(Y)g(X,Z),~\forall X, Y,
Z\in\Gamma(TM),
\end{eqnarray*}
where $\nabla$ is Levi-civita connection. This semi-symmetric
non-metric connection was further developed by U. C. De and S. C.
Biswas \cite{DB}, U. C. De and D. Kamily \cite{DK}. N. S. Agashe and
M. R. Chafle \cite{AC} defined the curvature tensor with respect to
semi-symmetric non-metric connections, and proved the Weyl
projective curvature tensor with respect to semi-symmetric
non-metric connections is equal to the Weyl projective curvature
tensor with respect to Levi-Civita connection. They further got a
necessary and sufficient condition that  a Riemannian manifold with
vanishing Ricci tensor with respect to semi-symmetric non-metric
connections being projectively flat if and only if the curvature
tensor with respect to semi-symmetric non-metric connections is
vanished. U. C. De and S. C. Biswas \cite{DB} discussed the
semi-symmetric non-metric connection on Riemannian manifold by using
the similar arguments, and obtained some properties of curvature
tensors with respect to semi-symmetric non-metric connections, and
proposed that two semi-symmetric non-metric connections would be
equal under certain conditions.

The study of geometric analysis in sub-Riemannian manifolds has been
an active field over the past several decades. The past decade has
witnessed a dramatic and widespread expansion of interest and
activity in sub-Riemannian geometry. In particular, round about
1993, since the formidable papers were published in succession,
these works stimulate such research fields to present a scene of
prosperity, and demonstrate the abnormal importance of this topic.
Sub-Riemannian manifolds, on the one hand, are the natural
development of Riemannian manifolds, and are the basic metric spaces
on which one can consider the problems of geometric analysis; On the
other hand, sub-Riemannian manifolds have been found useful in the
study of theories and applications of Control theory, PDEs, Calculus
of Variations, Mechanic, Gauge fields, etc. The study of geometric
analysis in sub-Riemannian manifolds is carrying on the following
two folds. The first fold is describing the geometric properties of
sub-Riemannian manifolds\cite{B,DGN,FSC,GN}; The second fold is
devoted to the analysis problem of Sub-Riemannian
manifolds\cite{CL,JX,LS}. In the past decades, we have focused our
attention on the sub-Riemannian geodesics, and  got some interesting
and remarkable results. Although a sub-Riemannian manifold is an
natural generalization of a Riemannian manifold,  there are some
essential differences. One of the essential differences is that
there exists a kind of strange geodesics which are minimal geodesics
and topological stability, but does not satisfy the geodesics
equation. We call them singular geodesics. The existence of singular
geodesics shows the importance of sub-Riemannian geometry. The
second difference is that the endpoint mapping  can be defined by
the normal sub-Riemmanian geodesic but it is not diffeomorphic any
more. On the other hand, the horizontal connection
$\nabla^{H,\Sigma}$, used for instance for studying the minimal
surface and isoperimetric problem in sub-Rieamnnian manifolds,
defined on hypersurface $\Sigma$ is in general not torsion free, and
therefore it is not Levi-Civita any more, so the horizontal second
fundamental form $II^{H,\Sigma}$ is not symmetric, which is also
different from Riemannian case\cite{DGN2}. In this paper we will
take the liberty of considering the geometries of sub-Riemannian
manifolds via a point of view of transform groups, our final purpose
is to establish the relevant geometries in the sense of
transformative theories.

As it is well known, there exists a unique symmetric metric
nonholonomic connection (i.e. sub-Riemannian connection or
horizontal connection in this paper) in sub-Riemannian manifolds
just as Levi-Civita connection in Riemannian manifolds. According to
the geometric characteristics of Levi-Civita connection, this
symmetric metric nonholonomic connection in sub-Riemannian manifolds
can preserve the inner product of any two horizontal vector fields
when they transport along a horizontal curve. However there may be
existing a bad nonholonomic connetion in a sub-Riemannian manifold
which can not preserve the torsion property, so it is urgent and
important to study a kind of nonholonomic metric connection that is
not symmetric. The  problem of geometries and analysis of a
semi-symmetric metric nonholonomic connection emerges as the times
require.
 The semi-symmetric
nonholonomic metric connection in this paper is just a special
non-symmetric nonholonomic connection. Taking into account that
sub-Riemannian manifolds are a natural generalization of Riemannian
manifolds, we would ask whether we can consider the invariants from
symmetric metric nonholonomic connections to semi-symmetric metric
nonholonomic connection. Once we found the invariants under
connection transformations, we could study the property of an object
connection through an original connection. In order to study the
geometric properties in sub-Riemannian manifold, the second author
first discussed the transformations in Carnot-Caratheodory spaces,
and got the conformal invariants and projective invariants, which
can be regarded as an natural generalization of those conclusions in
Rimennian manifolds. We in this paper wish to use the unique
nonholonomic connection to solve the posed problems above. To the
author$^{,}$s knowledge, the study of the semi-symmetric metric
connection in sub-Riemannian manifolds is still a gap.

In this paper, we first define a semi-symmetric metric non-holonomic
connection in sub-Riemannian manifolds, and derive the relations
between a symmetric metric non-holonomic connection and a
semi-symmetric metric non-holonomic connection, and get an invariant
under the connection transformation $\nabla\rightarrow D$. We
further define the Weyl conformal curvature tensor
$\bar{C}^{h}_{ijk}$ and the Weyl projective curvature tensor
$\bar{W}^{h}_{ijk}$ of semi-symmetric metric nonholonomic
connections, and find that $\bar{C}^{h}_{ijk}$ is no longer an
invariant under the connection transformation from $\nabla$ to $D$,
which is obviously different from the Riemannian case. On the other
hand, we also deduce a sufficient and necessary condition that a
sub-Riemannian manifold admitting semi-symmetric metric connection
is flat. At last, we consider a group manifold and find the Carnot
group is an example of group manifolds, at the same time, we prove
that, a sub-Riemannian manifold associated with a semi-symmetric
metric connection  is a group manifold if and only if  the
sub-Riemannian manifold is of constant curvature.

The organization of this paper is as follows. In section 2, we will
recall and give the necessary information about Schouten curvature
tensor and symmetric metric connection in sub-Riemannian manifold.
Section 3 is devoted to the new definition and main Theorems.

\section{ Preliminaries }
\setcounter{section}{2}
 \setcounter{equation}{0}
 Let $M^{n}$ be an
$n$-dimensional smooth manifold. For each point $p\in M^{n}$, there
assigns a $\ell(2<\ell<n)$-dimensional subspace $V^{\ell}(p)$ of the
tangent space $T_{p}M$, then $V^{\ell}=\bigcup_{p\in M}V^{\ell}(p)$
forms a tangent sub-bundle of tangent bundles $TM=$ $\bigcup_{p\in
M}T_{p}M,$ $V^{\ell}$ is called a $\ell$-dimensional distribution
over $M^{n}$. For any point $p$, if there exists a neighbourhood $U$
and $\ell$ linearly independent vector fields $X_{1},$ $\cdots$,
$X_{\ell}$ in $U$ such that for each point $q\in U,$ $X_{\ell}(q),$
$\cdots$, $X_{\ell}(q)$ is a basis of subspace $V^{\ell}(q)$, then
we call  $V^{\ell}$ the $\ell$-dimensional smooth distribution
(called also a horizontal bundle), and $X_{1},$ $\cdots, X_{\ell}$
are called a local basis of $V^{\ell}$ in $U$. We also say that
$X_{1},$ $\cdots, X_{\ell}$  generate  $V^{\ell}$ in $U$. We denote
by $V^{\ell}|_{U}=Span\{X_{1}, \cdots, X_{\ell}\}$.

\begin{defn}\label{def21} We call $(M, V_{0}, g)$
a sub-Riemannian manifold with the sub-Riemannian structure
$(V_{0},g)$, if $V_{0}$ is a $\ell$-dimensional smooth distribution
over $M^{n}$, and $g$ is a fibre inner product in $V_{0}$. Here $g$
is called a sub-Riemannian metric and $V_{0}$ is called a horizontal
bundle. In general, $g$ can be regarded as some Riemannian metric
$\langle\cdot,\cdot\rangle$, defined on tangent bundle $TM$,
restricted to $V_{0}$. \end{defn}

Throughout the paper, we denote by $\Gamma(V_{0})$ the
$C^{\infty}(M)$ -module of smooth sections on $V_{0}$. Also, if not
stated otherwise, we use the following ranges for indices:
$i,j,k,h,\cdots \in\{1,\cdots,\ell\}$, $\alpha,\beta,\cdots
\in\{\ell+1,\cdots,n\}$. The repeated indices with one upper index
and one lower index indicates summation over their range.

\begin{defn}\label{def22} A nonholonomic connection
on sub-bundle $V_0\subset TM$ is a binary mapping $\nabla$ :
$\Gamma(V_{0})\times\Gamma(V_{0})\rightarrow\Gamma(V_{0})$
satisfying the following:
\begin{equation}\label{2}
 \nabla_{X}(Y+Z)=\nabla_{X}Y+\nabla_{X}Z,
\end{equation}
\begin{equation}\label{2}
\nabla_{X}(fY)=X(f)Y+f\nabla_{X}Y,
\end{equation}
\begin{equation}\label{2}
\nabla_{fX+gY}Z=f\nabla_{X}Z+g\nabla_{Y}Z,
\end{equation}
 where $X,$ $Y,$ $Z\in\Gamma(V_{0}),$ $f,$ $g\in C^{\infty}(M)$.
 \end{defn}

In order to study the geometry of $\{M, V_{0}, g\}$, we suppose that
there exists a Rimannian metric $<\cdot,\cdot>$ and $V_{1}$ is taken
as the complementary orthogonal distribution to $V_{0}$ in $TM$,
then, there holds $V_{0}\oplus V_{1}=TM$. Here we call $V_{1}$ the
vertical distribution. Denote by $X_{0}$  the projection of the
vector field $X$ from $TM$ onto $V_{0},$  and by $X_{1}$  the
projection of the vector field $X$ from $TM$ onto $V_{1}$.

\begin{defn}\label{def23} The torsion tensor of
nonhholonomic connection $\nabla$ is defined by
\begin{equation}\label{2}
T(X,Y)=\nabla_{X}Y-\nabla_{Y}X-[X,Y]_{0},\forall X,Y\in
\Gamma(V_{0}).
\end{equation}
\end{defn}
From Definition \ref{def23} we know the torsion tensor of horizontal
vector fields is still horizontal vector field, so we call it the
horizontal torsion tensor.

Assume that $\{e_{i}\},i=1,\cdots,\ell$ is a basis of $V_{0}$, then
the formulas $\nabla_{e_{i}}e_{j}= \{_{ij}^{k}\}e_{k},$ $i, j, k=1,$
$\cdots, \ell$ define $\ell^{3}$ functions as $\{_{ij}^{k}\}$, we
call $\{_{ij}^{k}\}$ the connection coefficients of the
non-holonomic connection $\nabla$.

It is well known that the Lie bracket $[\cdot, \cdot]$ on $M$ is a
Lie algebra structure of smooth tangent vector fields $\Gamma(TM)$,
then it is easy to see that   the following formula
\begin{equation}\label{2}
[e_{i}, e_{j}]_{0}=\Omega_{ij}^{k}e_{k}\nonumber,\\
\end{equation}
 determine $\ell^{3}$ functions $\Omega_{ij}^{k}$.

About the existence of this class of connections defined on the
horizontal bundle $V_{0}$, we have the same result as Riemannian
case.
\begin{thm}\label{thm21}\cite{CLG,TY}
Given a sub-Riemannian manifold $(M, V_{0}, g)$, then there exists a
unique nonholonomic connection satisfying
\begin{equation}\label{2}
Zg(X,Y)=g(\nabla_{Z}X,Y)+g(X,\nabla_{Z}Y),
\end{equation}
\begin{equation}\label{2}
T(X,Y)=\nabla_{X}Y-\nabla_{Y}X-[X,Y]_{0}=0.
\end{equation}
\end{thm}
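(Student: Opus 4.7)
The plan is to mimic the classical existence-and-uniqueness proof of the Levi-Civita connection, with the single modification that every appearance of the Lie bracket is replaced by its horizontal projection $[\cdot,\cdot]_{0}$. The nondegeneracy of $g$ on $V_{0}$ still lets us recover a horizontal vector from its inner products against all horizontal test vectors, which is the only algebraic ingredient one really needs.

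For uniqueness, I would assume $\nabla$ satisfies both (2.5) and (2.6) and expand $Xg(Y,Z)+Yg(Z,X)-Zg(X,Y)$ using metric compatibility, then substitute $\nabla_{X}Z-\nabla_{Z}X=[X,Z]_{0}$ (and the analogous relations) to obtain the sub-Riemannian Koszul identity
\begin{equation*}
2g(\nabla_{X}Y,Z)=Xg(Y,Z)+Yg(Z,X)-Zg(X,Y)+g([X,Y]_{0},Z)-g([X,Z]_{0},Y)-g([Y,Z]_{0},X),
\end{equation*}
valid for all $X,Y,Z\in\Gamma(V_{0})$. Since $g$ is a fibre inner product on $V_{0}$, the right-hand side, viewed as a function of $Z$, determines $\nabla_{X}Y\in\Gamma(V_{0})$ uniquely. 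For existence, I would turn this identity into a definition: declare $\nabla_{X}Y$ to be the unique section of $V_{0}$ whose $g$-pairings with arbitrary $Z\in\Gamma(V_{0})$ are given by the right-hand side, and then verify the axioms of Definition \ref{def22} together with (2.5) and (2.6).

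$\mathbb{R}$-linearity in $Y$ and additivity in $X$ are immediate from the formula. The $C^{\infty}$-linearity in $X$ follows from the fact that the Lie-bracket terms produce only $f$-multiplied contributions once the derivation terms on $f$ are cancelled by the symmetric pair $Xg(Y,Z)$ vs. $Yg(Z,X)$; the Leibniz rule $\nabla_{X}(fY)=X(f)Y+f\nabla_{X}Y$ is checked by substituting $fY$ into the Koszul formula, expanding $[X,fY]_{0}=f[X,Y]_{0}+X(f)Y$ and $[fY,Z]_{0}=f[Y,Z]_{0}-Z(f)Y$, and observing that the spurious $X(f),Z(f)$ contributions combine into $2X(f)g(Y,Z)$ thanks to the symmetry of $g$. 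Metric compatibility (2.5) and torsion-freeness (2.6) are then read off by symmetrizing (respectively antisymmetrizing) the defining identity in the appropriate pair of entries.

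The only step requiring a bit of care, and where the sub-Riemannian setting genuinely differs from the Riemannian one, is the tensoriality check: one must ensure that replacing the true bracket $[\cdot,\cdot]$ by the horizontal bracket $[\cdot,\cdot]_{0}$ does not spoil the cancellations in the Leibniz computation. This works because for horizontal $X,Y$ one has $[X,fY]=f[X,Y]+X(f)Y$ and the added term $X(f)Y$ is already horizontal, so taking horizontal projection commutes with the $f$-derivative terms; the same observation handles $[fY,Z]_{0}$. Once this is verified, both (2.5) and (2.6) follow mechanically, so the connection furnished by the Koszul formula is precisely the unique nonholonomic connection claimed in Theorem~\ref{thm21}.
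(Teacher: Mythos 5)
Your Koszul-formula argument is correct, and the sub-Riemannian modifications you flag are exactly the right ones: since for horizontal $X,Y$ the derivative terms in $[X,fY]=f[X,Y]+X(f)Y$ and $[fY,Z]=f[Y,Z]-Z(f)Y$ are themselves horizontal, projecting the bracket onto $V_{0}$ commutes with these terms, so the classical cancellations in the tensoriality and Leibniz checks survive verbatim; the fibrewise nondegeneracy of $g$ on $V_{0}$ then recovers $\nabla_{X}Y$ from its pairings, and symmetrizing/antisymmetrizing the identity yields (2.5) and (2.6) just as you say (I verified the sign bookkeeping: the difference of the formula in $X,Y$ produces $2g([X,Y]_{0},Z)$, and the symmetrized sum collapses to $2Xg(Y,Z)$). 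Note, however, that the paper itself contains no proof of Theorem~\ref{thm21}: it cites \cite{CLG,TY}, and Remark 2.1 refers to \cite{ZJ} for a simplified proof and to Yano's method \cite{Ya} of projecting the Levi-Civita connection of an ambient Riemannian extension onto the distribution, i.e.\ setting $\nabla_{X}Y=(\tilde{\nabla}_{X}Y)_{0}$ where $\tilde{\nabla}$ is Levi-Civita for a metric extending $g$ with $V_{1}=V_{0}^{\perp}$. That projection route gives existence almost immediately and carries the geometric interpretation that the horizontal connection is the shadow of a Riemannian one, but uniqueness (and independence of the chosen extension) still needs a Koszul-type identity or a difference-tensor argument. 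Your intrinsic approach settles existence and uniqueness in one stroke and makes transparent that $\nabla$ depends only on the fibre metric $g$ on $V_{0}$ and on the projection $(\cdot)_{0}$, not on any Riemannian extension — a genuinely self-contained alternative to the argument the paper delegates to the literature.
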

\begin{rem}\label{re21}
Similar to Riemannian manifolds, we also say that the non-holonomic
connections with property $(2.5)$ and $(2.6)$ are metric and
torsion-free, respectively. An non-holonomic connection satisfying
$(2.5)$ and $(2.6)$ is called a sub-Riemannian connection or a
horizontal connection. For a simplified proof of Theorem
\ref{thm21}, one can see \cite{ZJ} for details. On the other hand,
K. Yano \cite{Ya} posed a proof with a method of projecting the
Riemannian connection onto the distribution to derive Theorem
\ref{thm21} in the case of Riemannian manifolds.
\end{rem}

Next we discuss the horizontal connection of Carnot group, which is
a very important example of sub-Riemannian manifolds. If $G$ is Lie
group with graded Lie algebra satisfying
\begin{eqnarray}\label{2}
&&\hbar=V_{0}\oplus V_{1}\oplus\cdots \oplus V_{r-1}\nonumber,\\
&&[V_{0},V_{j}]=V_{j+1}, j=1,2,\cdots,r-1,
\end{eqnarray}
then we call $G$ a Carnot group. Let $\circ$ be the group law on
$G$, then the left translation operator is $L_{p}:q\rightarrow
p\circ q$, denote by $(L_{p})_{*}$ the differential of $L_{p}$. Now
we can define the horizontal subspace as
\begin{equation}\label{2}
HG_{p}=(L_{p})_{*}(V_{0}),\nonumber\\
\end{equation}
for any point $p\in G$,  and the horizontal bundle as
\begin{equation}\label{2}
HG=\bigcup_{p\in G}HG_{p}\nonumber.\\
\end{equation}
Then we further consider the vertical distribution on $G$ defined by
\begin{equation}\label{2}
VG_{p}=(L_{p})_{*}(V_{1}\oplus\cdots \oplus V_{r-1}),\nonumber\\
\end{equation}
\begin{equation}\label{2}
VG=\bigcup_{p\in G}VG_{p}\nonumber.\\
\end{equation}
Now, we fix a basis $X_{1},\cdots,X_{\ell}$ formed by the left
invariant vector fields, then, by $(2.7)$, we deduce that
\begin{equation}\label{2}
[\Gamma(VG),X_{k}]\in \Gamma(VG),
\end{equation}
 and fix the
inner product $<\cdot,\cdot>$ in $TG$ such that the system of
left-invariant vector fields
$\{X_{1},\cdots,X_{k},Y_{1},\cdots,Y_{n-k}\}$ is an orthnormal basis
of $TG$, so there is an natural nonholonomic connection $\nabla$ on
$HG$ satisfying
\begin{equation}\label{2}
\nabla_{X}Y=X(Y^{i})X_{i},
\end{equation}
where $Y=Y^{i}X_{i}$.

 For sub-Riemannian manifolds, J. A. Shouten first considered
the curvature problem of non-holonomic connections(see \cite{CLG}),
he defined a curvature tensor as follows:
 \begin{defn}\label{def24}
A Shouten curvature tensor is a mapping $K$ :
$\Gamma(V_{0})\times\Gamma(V_{0})\rightarrow\Gamma(V_{0})$ defined
by
\begin{equation}\label{2}
K(X,Y)Z=\nabla_{X}\nabla_{Y}Z-\nabla_{Y}\nabla_{X}Z-\nabla_{[X,Y]_{0}}Z-[[X,Y]_{1},Z]_{0},
\end{equation}
where $X,$ $Y,$ $Z\in\Gamma(V_{0})$.\end{defn}

 If $M$ is a Carnot group $G$, the
Schouten curvature tensor, because of (2.8), is of the form
\begin{equation}\label{2}
K(X,Y)Z=\nabla_{X}\nabla_{Y}Z-\nabla_{Y}\nabla_{X}Z-\nabla_{[X,Y]_{0}}Z.
\end{equation}

\begin{rem}\label{re22}
It is easy to check that Definition \ref{def24} is well defined. In
fact, we know that the following formulas are tenable.
\begin{equation}\label{2}
K(fX, Y)Z=fK(X, Y)Z\nonumber,\\
\end{equation}
\begin{equation}\label{2}
K(X, fY)Z=fK(X, Y)Z\nonumber,\\
\end{equation}
\begin{equation}\label{2}
K(X, Y)(fZ)=fK(X, Y)Z\nonumber,\\
\end{equation}
\end{rem}

For Shouten tensor, by using Jacobi identity of Poisson bracket and
Definition \ref{def24}, we have
\begin{equation}\label{2}
K(X, Y)Z=-K(Y, X)Z,
\end{equation}
\begin{equation}\label{2}
K(X,Y)Z+K(Y,Z)X+K(Z,X)Y=0.
\end{equation}

It is well known that there hold the following formulas for the
curvature tensor $R$ over Riemannian manifolds
\begin{equation}\label{2}
R(X,Y,Z,W)=-R(Y,X,Z,W),
\end{equation}
\begin{equation}\label{2}
R(X,Y,Z,W)=-R(X,Y,W,Z),
\end{equation}
\begin{equation}\label{2}
R(X,Y,Z,W)=R(Z,W,X,Y).
\end{equation}

We also define (0,4)-tensor by $K(X,Y,Z,W)=g(K(X,Y)Z,W)$, which
satisfies the following
\begin{equation}\label{2}
K(X,Y,Z,W)=-K(Y,X,Z,W),
\end{equation}
\begin{equation}\label{2}
K(X,Y,Z,W)+K(Y,Z,X,W)+K(Z,X,Y,W)=0.
\end{equation}

However, since the horizontal distribution $V_{0}$ is not
involutive, so the curvature tensor $K$ does not satisfy
$K(X,Y,Z,W)=-K(X,Y,W,Z)$, we only obtain
\begin{eqnarray*}\label{2}
K(X,Y,Z,W)&=&-K(X,Y,W,Z)-g([[X,Y]_{1},W]_{0},Z)-g([[X,Y]_{1},W]_{0},Z)\\
&+&[X,Y]_{1}g(Z,W).
\end{eqnarray*}
 When $V_{0}$ is involutive, i.e., $[X,Y]_{1}=0$, in this
setting, we have the analogue similar to Riemannian curvature
tensors.
\begin{rem}\label{re23}
Since the curvature tensor $K$ does not satisfy properties
$(2.15),(2.16)$, so we can not give out the second Bianchi identity
of Shouten curvature tensors similar to Riemannian curvature
tensors.
\end{rem}

 Let $\{e_{i}\}$ be a  basis  of $V_{0}$, we denote by
\begin{eqnarray*}\label{2}
&&K(e_{i},e_{j})e_{k}=K^{h}_{ijk}e_{h},\\
&&\nabla_{e_{i}}e_{j}=\{_{ij}^{k}\}e_{k},\\
&&[e_{i},e_{j}]_{0}=\Omega_{ij}^{k}e_{k},\\
&&[e_{i},e_{j}]_{1}=M_{ij}^{\alpha}e_{\alpha},\\
&&[[e_{i},e_{j}]_{1},e_{k}]_{0}=M_{ij}^{\alpha}\Lambda_{\alpha
k}^{h}e_{h}.
\end{eqnarray*}

Then we know that
\begin{eqnarray}\label{211}
K^{h}_{ijk}=e_{i}(\{_{jk}^{h}\})-e_{j}(\{_{ik}^{h}\})+\{_{jk}^{e}\}\{_{ie}^{h}\}
-\{_{ik}^{e}\}\{_{je}^{h}\}-\Omega_{ij}^{e}\{_{ke}^{h}\}
-M_{ij}^{\alpha}\Lambda_{\alpha k}^{h}
\end{eqnarray}
Since $\nabla$ is torsion free, then we get
 \begin{eqnarray*}\label{211}
\nabla_{e_{i}}e_{j}-\nabla_{e_{j}}e_{i}-[e_{i},e_{j}]_{0}=0,
\end{eqnarray*}
so we arrive at
 \begin{equation}\label{211}
\{_{ij}^{k}\}-\{_{ji}^{k}\}=\Omega_{ij}^{k},
\end{equation}
  we further have
\begin{equation}\label{211}
[e_{i}, e_{j}]-\Omega_{ij}^{k}e_{k}=M_{ij}^{\alpha}e_{\alpha}.
\end{equation}
 Especially, if the horizontal distribution
$V_{0}$ is involutive, then we obtain
 \begin{eqnarray}\label{211}
K^{h}_{ijk}=e_{i}(\{_{jk}^{h}\})-e_{j}(\{_{ik}^{h}\})+\{_{jk}^{e}\}\{_{ie}^{h}\}
-\{_{ik}^{e}\}\{_{je}^{h}\}-\Omega_{ij}^{e}\{_{ke}^{h}\}.
\end{eqnarray}

 In this basis, $(2.12),(2.13)$ can be rewritten,  respectively, as
\begin{equation}\label{2}
K^{h}_{ijk}=-K^{h}_{jik},
\end{equation}
\begin{equation}\label{2}
K^{h}_{ijk}+K^{h}_{jki}+K^{h}_{kij}=0,
\end{equation}
We call $(2.13), (2.18)$ and $(2.24)$ the first Bianchi identity of
sub-Riemannian connection $\nabla$.

In $(2.24)$, by taking $j=h=e$ and using $(2.23)$, we get
\begin{equation}\label{2}
K^{e}_{kie}=K^{e}_{kei}-K^{e}_{iek},
\end{equation}
It is clear that $K^{e}_{kie}$ is an anti-symmetric (0,2) tensor ,
which is different from Riemannian case. So
\begin{eqnarray*}\label{2}
0=K^{e}_{kie}g^{ki}+K^{e}_{ike}g^{ki}=K^{e}_{kie}g^{ki}+K^{e}_{ike}g^{ik}=2K^{e}_{kie}g^{ki}.
\end{eqnarray*}
Now multiplying $g^{ki}$ at both side of $(2.25)$, then
$g^{ki}K^{e}_{kei}-K^{e}_{iek}g^{ki}=0$. Similar to the case of
Riemannian manifolds, we call $K=g^{ik}K^{e}_{iek}$ the scalar
curvature of Shouten curvature tensors.

 \section{ Main Theorems and Proofs} \setcounter{section}{3}
 \setcounter{equation}{0}

 Theorem \ref{thm21} shows that there exists unique metric and torsion free nonholonomic
 connection in sub-Riemannian manifolds, while there also exist other some nonholonomic
 connections which is not compatible with sub-Riemannian metric any more, nor is torsion
 free. For the first time, we introduce a very important nonholonomic
 connection-semi-sub-Riemannian connection. Roughly speaking, a semi-sub-Riemannian connection is a nonholonomic
 connection with non-vanishing torsion tensor which is compatible with sub-Riemannian
 metric. More precisely, let $D$ be another non-holonomic connection on M and the
coefficients be $\Gamma_{ij}^{k}$. D is said to be a  metric
connection if it satisfies
\begin{equation}\label{2}
(D_{Z}g)(Y,Z)=Zg(X,Y)-g(D_{Z}X,Y)-g(X,D_{Z}Y)=0, \forall
X,Y,Z\in{V_0},
\end{equation}
Now we give a new definition below

 \begin{defn}\label{def31} A nonholonomic connection is called a
 semi-sub-Riemannian connection, if it is metric and it$^{,}$s torsion tensor satisfies
\begin{equation}\label{2}
T(X,Y)=D_{X}Y-D_{X}Y-[X,Y]_{0}=\pi(Y)X-\pi(Y)X ,   \forall
X,Y,Z\in{V_{0}},
\end{equation}
where $\pi$ is a smooth 1-form.\end{defn}

For the semi-sub-Riemannian connection $D$, recurrent $X,Y,Z\in
V_{0}$ in $(3.1)$, and by a direct computation, we get
\begin{equation}\label{2}
D_{X}Y=\nabla_{X}Y+\pi(Y)X-g(X,Y)P,
\end{equation}
where $P$ is a vector field defined by $g(P,X)=\pi(X)$.

\begin{rem}
$(3.3)$ is also called semi-symmetric connection transformation of
$\nabla$. It is easy to check the semi-symmetric connection
transformation of metric torsion-free nonholonmice connection is
still a metric connection by $(3.3)$. This transformation will
change horizontal curves into horizontal curves, however it is not
true for the horizontal curves paralleling with itself(i.e. normal
geodesics), we will discuss the connection transformations that
conserve the normal geodesics in forthcoming papers.
\end{rem}

In local  frame $\{e_{i}\}$, denote by $\pi(e_{i})=\pi_{i}$,
$\pi^{i}=g^{ij}\pi_{j}$, then we know
\begin{equation}\label{2}
\Gamma_{ij}^{k}=\{_{ij}^{k}\}+\delta_{i}^{k}\pi_{j}-g_{ij}\pi^{k},
\end{equation}
we define the Schouten curvature tensor of semi-sub-Riemannnian
connection $D$ is
\begin{eqnarray}\label{211}
R^{h}_{ijk}=e_{i}(\Gamma_{jk}^{h})-e_{j}(\Gamma_{ik}^{h})+\Gamma_{jk}^{e}\Gamma_{ie}^{h}
-\Gamma_{ik}^{e}\Gamma_{je}^{h}-\bar{\Omega}_{ij}^{e}\Gamma_{ke}^{h}
-\bar{M}_{ij}^{\alpha}\bar{\Lambda}_{\alpha k}^{h},
\end{eqnarray}
where
\begin{equation}\label{2}
[e_{i},e_{j}]_{0}=\bar{\Omega}_{ij}^{k}e_{k},\nonumber\\
\end{equation}
\begin{equation}\label{2}
[e_{i},e_{j}]_{1}=\bar{M}_{ij}^{\alpha}e_{\alpha},\nonumber\\
\end{equation}
\begin{equation}\label{2}
[[e_{i},e_{j}]_{1},e_{k}]_{0}=\bar{M}_{ij}^{\alpha}\bar{\Lambda}_{\alpha k}^{h}e_{h},\nonumber\\
\end{equation}
then by using (2.20), (2.21) and (3.4), we have
\begin{eqnarray}\label{2}
\begin{cases}
\bar{\Omega}_{ij}^{k}=\Omega_{ij}^{k}\\
\bar{M}_{ij}^{\alpha}=M_{ij}^{\alpha}\\
 \bar{\Lambda}_{\alpha
k}^{h}=\Lambda_{\alpha k}^{h}
\end{cases}
\end{eqnarray}
Substituting $(3.4)$ and $(3.6)$ into $(3.5)$ and by straightway
computation, we can get the relation between the Schouten curvature
tensor of $D$ and $\nabla$ as follows
\begin{equation}\label{2}
R^{h}_{ijk}=K^{h}_{ijk}+\delta_{j}^{h}\pi_{ik}-\delta_{i}^{h}\pi_{jk}+\pi_{j}^{h}g_{ik}-\pi_{i}^{h}g_{jk},
\end{equation}
where
\begin{equation}
\pi_{ik}=\nabla_{i}\pi_{k}-\pi_{i}\pi_{k}+\frac{1}{2}g_{ik}\pi_{h}\pi^{h},
\end{equation}
\begin{equation}
\pi_{i}^{j}=\pi_{ik}g^{jk}=\nabla_{i}\pi^{j}-\pi_{i}\pi^{j}+\frac{1}{2}\delta_{i}^{j}\pi_{h}\pi^{h},
\end{equation}
\begin{equation}
\nabla_{i}\pi_{j}=e_{i}(\pi_{j})-\{^{k}_{ij}\}\pi_{k}.
\end{equation}
Here we call $\pi_{ij}$ the characteristic tensor of $D$, and
$\alpha=\pi_{ij}g^{ij}=\pi_{i}^{i}$. Contracting $j$ and $h$ in
$(3.7)$, we have
\begin{equation}\label{2}
R^{e}_{iek}=K^{e}_{iek}+(\ell-2)\pi_{ik}+\alpha g_{ik}.
\end{equation}
Multiplying $(3.11)$ by $g^{ik}$ we get
\begin{equation}\label{3}
R=K+2(\ell-1)\alpha,
\end{equation}
so there is
\begin{equation}\label{3}
\alpha=\frac{R-K}{2(\ell-1)}.
\end{equation}
Substituting $(3.13)$ into $(3.11)$ we have
\begin{equation}
\pi_{ik}=\frac{1}{\ell-2}(R^{e}_{iek}-K^{e}_{iek}-\frac{R-K}{2(\ell-1)}g_{ik}),
\end{equation}
\begin{equation}
\pi_{i}^{h}=\frac{1}{\ell-2}\{(R^{e}_{iek}-K^{e}_{iek})g^{kh}-\frac{R-K}{2(\ell-1)}\delta_{i}^{h}\},
\end{equation}
then substituting $(3.14), (3.15)$ into $(3.7)$,  we get
\begin{eqnarray}
&&R^{h}_{ijk}-\frac{1}{\ell-2}\{\delta_{j}^{h}(R^{e}_{iek}-\frac{R}{2(\ell-1)}g_{ik})-\delta_{i}^{h}(R^{e}_{jek
}-\frac{R}{2(\ell-1)}g_{jk})
\nonumber\\
&-&     g_{ik}(R^{e}_{jef}g^{fh}-\frac{R}{2(\ell-1)}\delta_{j}^{h}
)+g_{jk}(R^{e}_{ief}g^{fh}-\frac{R}{2(\ell-1)}\delta_{i}^{h} )\}
\nonumber\\
&=&K^{h}_{ijk}-\frac{1}{\ell-2}\{\delta_{j}^{h}(K^{e}_{iek}-\frac{R}{2(\ell-1)}g_{ik})-\delta_{i}^{h}(K^{e}_{jek
}-\frac{K}{2(\ell-1)}g_{jk})
\nonumber\\
&+&     g_{ik}(K^{e}_{jef}g^{fh}-\frac{K}{2(\ell-1)}\delta_{j}^{h}
)-g_{jk}(K^{e}_{ief}g^{fh}-\frac{K}{2(\ell-1)}\delta_{i}^{h} )\}.
\end{eqnarray}

Let
\begin{eqnarray*}
\bar{S}^{h}_{ijk}&=&R^{h}_{ijk}-\frac{1}{\ell-2}\{\delta_{j}^{h}(R^{e}_{iek}-\frac{R}{2(\ell-1)}g_{ik})-\delta_{i}^{h}(R^{e}_{jek
}-\frac{R}{2(\ell-1)}g_{jk})\\
&-&
 g_{ik}(R^{e}_{jef}g^{fh}-\frac{R}{2(\ell-1)}\delta_{j}^{h})+g_{jk}(R^{e}_{ief}g^{fh}-\frac{R}{2(\ell-1)}\delta_{i}^{h}
)\}\\
&=&R^{h}_{ijk}-\frac{1}{\ell-2}\{\delta_{j}^{h}R^{e}_{iek
}-\delta_{i}^{h}R^{e}_{jek}+g_{ik}g^{fh}R^{e}_{jef}-g_{jk}g^{fh}R^{e}_{ief}\}\\
&+&
\frac{R}{(\ell-1)(\ell-2)}(g_{ik}\delta_{j}^{h}-g_{jk}\delta_{i}^{h}),
\end{eqnarray*}
\begin{eqnarray}\label{211}
S^{h}_{ijk}&=&K^{h}_{ijk}-\frac{1}{\ell-2}\{\delta_{j}^{h}(K^{e}_{iek}-\frac{K}{2(\ell-1)}g_{ik})-\delta_{i}^{h}(K^{e}_{jek
}-\frac{K}{2(\ell-1)}g_{jk})
\nonumber\\
&+&    g_{ik}(K^{e}_{jef}g^{fh}-\frac{K}{2(\ell-1)}\delta_{j}^{h}
)-g_{jk}(K^{e}_{ief}g^{fh}-\frac{K}{2(\ell-1)}\delta_{i}^{h} )\}\nonumber\\
&=&K^{h}_{ijk}-\frac{1}{\ell-2}\{\delta_{j}^{h}K^{e}_{iek
}-\delta_{i}^{h}K^{e}_{jek}+g_{ik}g^{fh}K^{e}_{jef}-g_{jk}g^{fh}K^{e}_{ief}\}\nonumber\\
&+&
\frac{K}{(\ell-1)(\ell-2)}(g_{ik}\delta_{j}^{h}-g_{jk}\delta_{i}^{h}).
\end{eqnarray}

Therefore we have the following
\begin{thm}\label{thm31}
$S^{h}_{ijk}=\bar{S}^{h}_{ijk}$,  namely,  $S^{h}_{ijk}$ is an
invariant under the nonholonomic connection transformation
$\nabla\rightarrow D$.
\end{thm}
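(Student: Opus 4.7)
The theorem is an algebraic identity obtained by eliminating the auxiliary 1-form $\pi$ from the connection-transformation formula $(3.7)$. My strategy is to extract the appropriate traces of $(3.7)$, solve the resulting linear system for $\pi_{ik}$, $\pi^{h}_{i}$ and $\alpha$ in terms of $R$- and $K$-quantities, and then substitute back into $(3.7)$ to separate the $R$-dependence from the $K$-dependence.

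First I would contract $(3.7)$ on $j=h$: since $\delta^{h}_{h}=\ell$ and $\pi^{h}_{h}=\alpha$, the four $\pi$-terms collapse to $(\ell-2)\pi_{ik}+\alpha g_{ik}$, which is exactly $(3.11)$. A further contraction with $g^{ik}$ produces the scalar identity $(3.12)$, giving $\alpha$ explicitly in $(3.13)$. Back-substituting $(3.13)$ into $(3.11)$ yields the formula $(3.14)$ for $\pi_{ik}$ in terms of $R^{e}_{iek}-K^{e}_{iek}$ and the metric, and raising $k$ with $g^{kh}$ produces $(3.15)$ for $\pi^{h}_{i}$. At this point the 1-form data has been eliminated entirely in favour of Ricci-type contractions of the two Schouten tensors.

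The final step is to substitute $(3.14)$ and $(3.15)$ into the four $\pi$-terms $\delta^{h}_{j}\pi_{ik}-\delta^{h}_{i}\pi_{jk}+\pi^{h}_{j}g_{ik}-\pi^{h}_{i}g_{jk}$ appearing on the right side of $(3.7)$, and then collect the $R$-dependent pieces on the left and the $K$-dependent pieces on the right. The result is exactly the identity $(3.16)$, which by a term-by-term comparison with the definitions of $\bar{S}^{h}_{ijk}$ and $S^{h}_{ijk}$ stated immediately above the theorem is precisely the claim $\bar{S}^{h}_{ijk}=S^{h}_{ijk}$.

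The only real obstacle is bookkeeping. The scalar contribution $(R-K)/[2(\ell-1)]$ enters via \emph{both} $\pi_{ik}$ and $\pi^{h}_{i}$, so it appears twice in each $\pi$-term of $(3.7)$; one must verify that these scalar pieces recombine into the single factor $1/[(\ell-1)(\ell-2)]$ multiplying $(g_{ik}\delta^{h}_{j}-g_{jk}\delta^{h}_{i})$ on each side of $(3.16)$, and that the Ricci-type contractions match correctly under the antisymmetric $i\leftrightarrow j$ arrangement of the four $\pi$-terms. No geometric input beyond $(3.7)$ and the trace identities $(3.11)$--$(3.15)$ is required; the proof is a one-shot substitution, and its value lies in recognising that the particular combination defining $S^{h}_{ijk}$ is chosen precisely so that all $\pi$-dependence cancels.
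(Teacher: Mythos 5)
Your proposal is correct and follows the paper's own proof essentially verbatim: the paper likewise contracts $(3.7)$ on $j=h$ to obtain $(3.11)$, traces with $g^{ik}$ to get $(3.12)$--$(3.13)$, solves for $\pi_{ik}$ and $\pi_{i}^{h}$ in $(3.14)$--$(3.15)$, and substitutes back into $(3.7)$ to arrive at $(3.16)$, which is read off as $\bar{S}^{h}_{ijk}=S^{h}_{ijk}$. Your closing remark about the scalar pieces recombining into the factor $\frac{1}{(\ell-1)(\ell-2)}$ matches exactly how the paper groups the terms in its definitions of $\bar{S}^{h}_{ijk}$ and $S^{h}_{ijk}$.
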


 It is well
known that one of differences between sub-Riemannian geometry and
Riemannian case is that there exists a kind of singular geodesics,
which does not satisfy the geodesic equation, in sub-Riemannian
geometry, so when we consider the projective transformation of
$\nabla$, we should modify that, if semi-sub-Riemannnian connection
$D$ and sub-Riemannnian connection $\nabla$ has the same normal
geodesics, we call it the projective transformation of $\nabla$.
Therefore the Weyl projective transformation of $\nabla$ conserves
the normal geodesics invariant.

Recall the conformal curvature tensor and projective curvature
tensor (see \cite{ZJ}) of sub-Riemannian connection $\nabla$ are
respectively,
\begin{eqnarray*}
C^{h}_{ijk}&=&K^{h}_{ijk}-\frac{1}{\ell-2}\{\delta_{j}^{h}(k^{e}_{iek}-\frac{1}{\ell}K^{e}_{ike}-\frac{K}{2(\ell-1)}g_{ik})\nonumber\\
&&-\delta_{i}^{h}(K^{e}_{jek}-\frac{1}{\ell}K^{e}_{jke}-\frac{K}{2(\ell-1)}g_{jk})\nonumber\\
&&+g_{ik}(k^{e}_{jef}g^{fh}-\frac{1}{\ell}k^{e}_{jfe}g^{fh}-\frac{K}{2(\ell-1)}\delta_{j}^{h}
)\nonumber\\
&&-g_{jk}(K^{e}_{ief}g^{fh}-\frac{1}{\ell}K^{e}_{ife}g^{fh}-\frac{K}{2(\ell-1)}\delta_{i}^{h})\}\nonumber\\
&&+\frac{1}{\ell}\delta_{k}^{h}K^{e}_{ije},\\
W^{h}_{ijk}&=&K^{h}_{ijk}-\frac{1}{\ell-1}(\delta_{j}^{h}K^{e}_{iek}-\delta_{i}^{h}K^{e}_{jek}).
\end{eqnarray*}

For the semi-sub-Riemannnian connection $D$, we define the Weyl
conformal curvature tensor and the projective curvature tensor,
respectively,  by
\begin{eqnarray}
\bar{C}^{h}_{ijk}&=&R^{h}_{ijk}-\frac{1}{\ell-2}\{\delta_{j}^{h}(R^{e}_{iek}-\frac{1}{\ell}R^{e}_{ike}-\frac{R}{2(\ell-1)}g_{ik})\nonumber\\
&&-\delta_{i}^{h}(R^{e}_{jek}-\frac{1}{\ell}R^{e}_{jke}-\frac{R}{2(\ell-1)}g_{jk})\nonumber\\
&&+g_{ik}(R^{e}_{jef}g^{fh}-\frac{1}{\ell}R^{e}_{jfe}g^{fh}-\frac{R}{2(\ell-1)}\delta_{j}^{h}
)\nonumber\\
&&-g_{jk}(R^{e}_{ief}g^{fh}-\frac{1}{\ell}R^{e}_{ife}g^{fh}-\frac{R}{2(\ell-1)}\delta_{i}^{h})\}\nonumber\\
&&+\frac{1}{\ell}\delta_{k}^{h}R^{e}_{ije},\\
\bar{W}^{h}_{ijk}&=&R^{h}_{ijk}-\frac{1}{\ell-1}(\delta_{j}^{h}R^{e}_{iek}-\delta_{i}^{h}R^{e}_{jek}).
\end{eqnarray}

\begin{rem}\label{re31}
 By using $(3.7)$ and $(3.11)$, we get
\begin{eqnarray*}
\bar{C}^{h}_{ijk}&=&C^{h}_{ijk}-\frac{1}{\ell}(\delta_{j}^{h}\pi_{ik}-\delta_{i}^{h}\pi_{jk}+g_{ik}\pi_{j}^{h}-g_{jk}\pi_{i}^{h})\\
 &&-\frac{2\alpha}{\ell(\ell-2)}(\delta_{j}^{h}g_{ik}-\delta_{i}^{h}g_{jk})-\frac{\ell-2}{\ell}\delta_{k}^{h}\pi_{ij}-\frac{\alpha}{\ell}\delta_{k}^{h}g_{ij},\\
 \bar{W}^{h}_{ijk}&=&W^{h}_{ijk}+\frac{1}{\ell-1}(\delta_{j}^{h}\pi_{ik}
-\delta_{i}^{h}\pi_{jk})+(g_{ik}\pi_{j}^{h}-g_{jk}\pi_{i}^{h})\\
&& -\frac{\alpha}{\ell-1}(\delta_{j}^{h}g_{ik}
-\delta_{i}^{h}g_{jk}).
\end{eqnarray*}
Therefore unlike the Riemannian case,  here the Weyl conformal
curvature tensor $C^{h}_{ijk}$ is no longer an invariant under the
connection transformation from sub-Riemannian connection $\nabla$ to
semi-sub-Riemannnian connection $D$.
\end{rem}

Now we assume that $\bar{C}^{h}_{ijk}=C^{h}_{ijk}$, then
\begin{eqnarray*}
(\delta_{j}^{h}\pi_{ik}-\delta_{i}^{h}\pi_{jk}+g_{ik}\pi_{j}^{h}-g_{jk}\pi_{i}^{h})
+\frac{2\alpha}{\ell-2}(\delta_{j}^{h}g_{ik}-\delta_{i}^{h}g_{jk})+(\ell-2)\delta_{k}^{h}\pi_{ij}+\alpha\delta_{k}^{h}g_{ij}=0.
\end{eqnarray*}
Contracting the above equation by $k=h$, we obtain
\begin{equation}
(\ell-2)\pi_{ij}+\alpha g_{ij}=0,\nonumber\\
\end{equation}
multiplying $g^{ij}$ on both side of above equation, further we get
$\pi=0$. The inverse is also true, so we have the following result.
\begin{thm}\label{thm32}
The semi-sub-Riemannnian connection $D$ and the sub-Riemannnian
connection $\nabla$ have the same conformal curvature tensor if and
only if $\alpha$ is vanishing.
\end{thm}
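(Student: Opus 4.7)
The plan is to treat the theorem as a purely algebraic consequence of the explicit difference formula for $\bar{C}^h_{ijk}-C^h_{ijk}$ recorded in Remark~\ref{re31}. That identity already encodes the entire effect of the transformation $\nabla\to D$ in terms of the characteristic tensor $\pi_{ij}$, its raised form $\pi_i^h=g^{hk}\pi_{ik}$, the metric, Kronecker deltas, and the scalar $\alpha=g^{ij}\pi_{ij}$; the Schouten curvatures $K^h_{ijk}$ and $R^h_{ijk}$ themselves have already been eliminated. The whole argument is therefore tensor index bookkeeping on top of that formula.

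For the forward implication, I would assume $\bar{C}^h_{ijk}=C^h_{ijk}$, so the right-hand side of the Remark~\ref{re31} formula is zero, and then contract the upper index $h$ with the lower index $k$. The natural way to organize the calculation is to group the terms into four clusters, namely those involving $\delta_j^h\pi_{ik}-\delta_i^h\pi_{jk}$, those involving $g_{ik}\pi_j^h-g_{jk}\pi_i^h$, the $\alpha$-times-$\delta g$ piece, and the explicit $\delta_k^h$ piece, and then apply the contractions $\delta_h^h=\ell$, $\delta_j^h\pi_{ih}=\pi_{ij}$, $g_{ik}\pi_j^k=\pi_{ij}$, and $g^{ij}\pi_{ij}=\alpha$. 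Collecting the resulting $\pi_{ij}$-contributions and the $g_{ij}$-contributions separately should collapse everything to the clean relation
\begin{equation*}
(\ell-2)\pi_{ij}+\alpha\,g_{ij}=0,
\end{equation*}
which is exactly the equation anticipated immediately before the theorem. A further trace with $g^{ij}$ then yields $(2\ell-2)\alpha=0$, so $\alpha=0$ since $\ell>2$ is part of the sub-Riemannian hypothesis.

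For the converse, $\alpha=0$ inserted back into the linear relation $(\ell-2)\pi_{ij}+\alpha g_{ij}=0$ forces $\pi_{ij}=0$, after which every $\pi$-term and every $\alpha$-term in the Remark~\ref{re31} formula vanishes, giving $\bar{C}^h_{ijk}=C^h_{ijk}$.

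The hard part is the contraction step in the forward direction: three separate pieces of the Remark~\ref{re31} identity each collapse to a scalar multiple of $\pi_{ij}$ under the $k=h$ contraction, namely two from the $1/\ell$-weighted Kronecker-$\pi$ terms and one from the $(\ell-2)/\ell$-weighted $\delta_k^h\pi_{ij}$ term, and the signs and coefficients have to line up so that the net coefficient of $\pi_{ij}$ is precisely $\ell-2$ rather than, say, $\ell$ or $\ell-4$. Likewise three separate $\alpha g_{ij}$-contributions must sum to a single $\alpha g_{ij}$. Once these numerical factors are pinned down, both implications follow by elementary algebra and a single further trace with $g^{ij}$.
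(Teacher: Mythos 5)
Your forward direction reproduces the paper's argument exactly: set the difference formula of Remark \ref{re31} to zero, contract $k=h$, obtain $(\ell-2)\pi_{ij}+\alpha g_{ij}=0$, and trace with $g^{ij}$ to get $(2\ell-2)\alpha=0$, hence $\alpha=0$ since $\ell>2$. One bookkeeping slip in your description: under the contraction $k=h$ the four Kronecker/metric $\pi$-terms cancel among themselves, since $\delta_j^k\pi_{ik}-\delta_i^k\pi_{jk}+g_{ik}\pi_j^k-g_{jk}\pi_i^k=\pi_{ij}-\pi_{ji}+\pi_{ji}-\pi_{ij}=0$; the entire coefficient $\ell-2$ comes from the single term $\frac{\ell-2}{\ell}\,\delta_k^h\pi_{ij}$ via $\delta_h^h=\ell$, not from ``two Kronecker-$\pi$ contributions plus one.'' The resulting relation is nevertheless the correct one, so this does not affect the forward implication.

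The genuine gap is in your converse, and it is a circularity. The relation $(\ell-2)\pi_{ij}+\alpha g_{ij}=0$ was derived \emph{from} the hypothesis $\bar{C}^h_{ijk}=C^h_{ijk}$, so you may not invoke it when the standing hypothesis is only $\alpha=0$. With $\alpha=0$ alone, the Remark \ref{re31} formula reduces to
\[
\bar{C}^h_{ijk}=C^h_{ijk}-\frac{1}{\ell}\bigl(\delta_j^h\pi_{ik}-\delta_i^h\pi_{jk}+g_{ik}\pi_j^h-g_{jk}\pi_i^h\bigr)-\frac{\ell-2}{\ell}\,\delta_k^h\pi_{ij},
\]
and the surviving $\pi$-terms need not vanish: $\alpha=g^{ij}\pi_{ij}=0$ only says the characteristic tensor is trace-free, and contracting the displayed residue at $k=h$ shows it vanishes precisely when $\pi_{ij}=0$, so any nonzero trace-free $\pi_{ij}$ defeats the implication at the algebraic level. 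What the contraction argument genuinely establishes is the equivalence $\bar{C}^h_{ijk}=C^h_{ijk}\Leftrightarrow\pi_{ij}=0$. The paper's own treatment of the converse is only the sentence ``the inverse is also true'' following its (ambiguously written) conclusion ``$\pi=0$''; read as the full tensor condition $\pi_{ij}=0$ that converse is immediate, whereas read as the theorem's literal hypothesis $\alpha=0$ it does not follow. To repair your proof you must either strengthen the converse hypothesis to $\pi_{ij}=0$ or supply an argument --- present neither in your proposal nor in the paper --- that $\alpha=0$ forces $\pi_{ij}=0$ for the tensor defined in $(3.8)$, which is false in general.
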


Then we assume that $\bar{W}^{h}_{ijk}=W^{h}_{ijk}$, hence we have
\begin{equation}
\frac{1}{\ell-1}(\delta_{j}^{h}\pi_{ik}
-\delta_{i}^{h}\pi_{jk})+(g_{ik}\pi_{j}^{h}-g_{jk}\pi_{i}^{h})-\frac{\alpha}{\ell-1}(\delta_{j}^{h}g_{ik}
-\delta_{i}^{h}g_{jk})=0.
\end{equation}
By multiplying $g^{jk}$ in $(3.20)$, we get
\begin{eqnarray*}
\pi_{i}^{h}=\frac{\alpha}{\ell}\delta_{i}^{h}, ~\textrm{or},~
\pi_{ih}=\frac{\alpha}{\ell}g_{ih}.
\end{eqnarray*}

This implies the following
\begin{thm}\label{thm33}
The semi-sub-Riemannnian connection $D$ and the horizontal
connection $\nabla$ have the same projective curvature tensor if and
only if the characteristic tensor is proportional to a metric
tensor.
\end{thm}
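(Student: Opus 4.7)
The plan is to read the theorem as an immediate corollary of the transformation formula for $\bar{W}^{h}_{ijk}$ recorded in Remark \ref{re31}. Setting $\bar{W}^{h}_{ijk}=W^{h}_{ijk}$ there reduces the statement to the tensor identity (3.20), so it suffices to show that (3.20) holds (at every point and for all indices) if and only if the characteristic tensor $\pi_{ik}$ defined by (3.8) is a scalar multiple of the sub-Riemannian metric $g_{ik}$.

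For the necessary direction, I would follow the contraction strategy already indicated after (3.20) and multiply that identity by $g^{jk}$. Using $g^{jk}g_{jk}=\ell$, the trace definition $\alpha=g^{ij}\pi_{ij}$, and the convention (3.9) that $\pi_{i}^{h}=\pi_{ik}g^{kh}$, each of the four summands collapses to a combination of $\pi_{i}^{h}$ and $\alpha\delta_{i}^{h}$. After gathering the coefficients one obtains a relation of the form $-\ell\pi_{i}^{h}+\alpha\delta_{i}^{h}=0$ (up to an overall nonzero factor of $(\ell-2)/(\ell-1)$), which after lowering one index is precisely $\pi_{ih}=(\alpha/\ell)g_{ih}$.

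For the sufficient direction, assume $\pi_{ih}=\mu g_{ih}$ for some smooth function $\mu$. Tracing with $g^{ih}$ forces $\mu=\alpha/\ell$, and hence $\pi_{i}^{h}=(\alpha/\ell)\delta_{i}^{h}$. Substituting these two expressions into the right-hand side of the formula of Remark \ref{re31} groups the three correction terms into a single multiple of $(\delta_{j}^{h}g_{ik}-\delta_{i}^{h}g_{jk})$ with total coefficient
\[
\frac{\alpha}{\ell(\ell-1)}+\frac{\alpha}{\ell}-\frac{\alpha}{\ell-1},
\]
which a direct computation shows vanishes identically. Therefore $\bar{W}^{h}_{ijk}=W^{h}_{ijk}$.

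The argument is pure tensor algebra, so the only real difficulty is bookkeeping. One mild point to keep in mind is that $\pi_{ik}$ need not be symmetric a priori (since (3.8) involves $\nabla_{i}\pi_{k}$), so in the contraction step I must keep the order of the indices of $\pi_{ik}$ consistent with (3.9); the raised form $\pi_{i}^{h}$ then denotes a single unambiguous tensor, and the conclusion $\pi_{i}^{h}=(\alpha/\ell)\delta_{i}^{h}$ retroactively forces $\pi_{ik}$ itself to be symmetric and proportional to $g_{ik}$.
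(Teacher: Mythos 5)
Your proposal is correct and takes essentially the same route as the paper: the paper also derives necessity by contracting identity $(3.20)$ with $g^{jk}$ to obtain $\pi_{i}^{h}=\frac{\alpha}{\ell}\delta_{i}^{h}$, and proves sufficiency by substituting $\pi_{i}^{j}=\lambda\delta_{i}^{j}$ (so $\pi_{ij}=\lambda g_{ij}$, $\alpha=\lambda\ell$) into the $\bar{W}^{h}_{ijk}$ formula of Remark \ref{re31} and checking that the correction terms cancel. Your added observation about keeping the index order of the possibly non-symmetric $\pi_{ik}$ consistent with $(3.9)$ is a careful detail the paper leaves implicit, but it does not change the argument.
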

\begin{proof}
We just prove the sufficiency of Theorem \ref{thm33}. Let
$\pi_{i}^{j}=\lambda \delta_{i}^{j}$, then $\pi=\pi_{i}^{i}=\lambda
\ell$, and $\pi_{ij}=\lambda g_{ij}$. Substituting  these equations
above  into the second formula in Remark 3.2,  we get
$\bar{W}^{h}_{ijk}=W^{h}_{ijk}$. This ends the proof of Theorem
\ref{thm33}.
\end{proof}
Theorem 3.3 implies the connection transformations from
sub-Riemannian connection $\nabla$ to semi-sub-Riemannnian
connection $D$  that change normal geodesics into normal geodesics
also conserve the projective curvature tensor invariant under
certain conditions.

\begin{rem}\label{re32}
By comparing the tensor $\bar{S}^{h}_{ijk}$ with the conformal
curvature tensor $\bar{C}^{h}_{ijk}$ defined by $(3.18)$, we find
that
\begin{equation}
\bar{C}^{h}_{ijk}=\bar{S}^{h}_{ijk}+\frac{1}{\ell(\ell-2)}(\delta_{j}^{h}R^{e}_{ike}
-\delta_{i}^{h}R^{e}_{jke}+g_{ik}R^{e}_{jfe}g^{fh}-g_{jk}R^{e}_{ife}g^{fh})+\frac{1}{\ell}\delta_{k}^{h}R^{e}_{ije}
\end{equation}
Given that $K^{e}_{ije}=0$, then $R^{e}_{ije}=0$(for any $i, j$), so
$\bar{C}^{h}_{ijk}=\bar{S}^{h}_{ijk}$, $C^{h}_{ijk}=S^{h}_{ijk}$.
Hence Theorem 3.1 implies that a geometric characteristic of tensor
$S^{h}_{ijk}$ is conformal invariant tensor under certain
conditions.
\end{rem}

Now we assume $R^{h}_{ijk}=K^{h}_{ijk}$, then
\begin{equation}
\delta_{j}^{h}\pi_{ik}-\delta_{i}^{h}\pi_{jk}+\pi_{j}^{h}g_{ik}-\pi_{i}^{h}g_{jk}=0.
\end{equation}
Contracting the equation $(3.22)$ with  $i$ and $h$, we get
\begin{equation}
(2-\ell)\pi_{jk}-\alpha g_{jk}=0.
\end{equation}
Multiplying the equation $(3.23)$ by $g^{jk}$ we get
\begin{equation}
2(\ell-1)\alpha=0 ,\nonumber\\
\end{equation}
and $\ell>2$, therefore $\alpha=0$; the converse is  also true, thus
we have

\begin{thm}\label{thm34}
The semi-sub-Riemannnian connection $D$ and the sub-Riemannian
connection $\nabla$ have the same Schouten curvature tensor if and
only if $\alpha$ is vanishing.
\end{thm}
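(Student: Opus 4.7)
The plan is to read off the biconditional directly from the transformation formula (3.7), which writes $R^{h}_{ijk} = K^{h}_{ijk} + \delta_{j}^{h}\pi_{ik} - \delta_{i}^{h}\pi_{jk} + \pi_{j}^{h} g_{ik} - \pi_{i}^{h} g_{jk}$ and therefore isolates the difference $R^{h}_{ijk}-K^{h}_{ijk}$ as an explicit algebraic expression in the characteristic tensor $\pi_{ij}$ and its raised form $\pi_{j}^{h}$. The biconditional will then follow from contract-and-trace manipulations, in the same spirit as the arguments already used for Theorems \ref{thm32} and \ref{thm33}.

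For the necessity direction, I would first assume $R^{h}_{ijk}=K^{h}_{ijk}$, so that $\delta_{j}^{h}\pi_{ik} - \delta_{i}^{h}\pi_{jk} + \pi_{j}^{h} g_{ik} - \pi_{i}^{h} g_{jk}=0$. Contracting $h$ with $i$ and using the identities $\delta_{i}^{i}=\ell$, $\pi_{j}^{i}g_{ik}=\pi_{jk}$, and $\pi_{i}^{i}=\alpha$, the four terms collapse to the intermediate relation $(2-\ell)\pi_{jk}-\alpha g_{jk}=0$ of (3.23). A further trace with $g^{jk}$ then gives $-2(\ell-1)\alpha=0$, and the ambient dimensional hypothesis $\ell>2$ immediately forces $\alpha=0$.

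For the converse, I plan to read the intermediate identity $(\ell-2)\pi_{jk}=-\alpha g_{jk}$ backwards: with $\alpha=0$ and $\ell>2$ it forces $\pi_{jk}\equiv 0$, hence $\pi_{j}^{h}\equiv 0$ as well, and substituting back into (3.7) makes every one of the four correction terms disappear, yielding $R^{h}_{ijk}=K^{h}_{ijk}$. The key observation that makes the single scalar $\alpha$ strong enough is that all four extra terms in (3.7) depend on $\pi_{ij}$ linearly and in a tightly coupled way, so their collective vanishing is indeed controlled by a single trace.

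The step I expect to require the most care is precisely this converse: a priori the vanishing of the scalar $\alpha$ is a much weaker condition than the vanishing of the full tensor $\pi_{ij}$, and the relation $(\ell-2)\pi_{jk}=-\alpha g_{jk}$ used to close the argument was derived under the curvature-equality hypothesis rather than from $\alpha=0$ alone. The cleanest way to remove this apparent circularity is to verify, directly from the algebraic structure of (3.7), that the four-term combination on the right-hand side can only vanish when $\pi_{ij}$ is pure-trace, and then to use the trace equation to conclude $\pi_{ij}\equiv 0$ once $\alpha=0$. Once this subtle step is pinned down, the remainder of the proof is a short trace computation, with the dimension bound $\ell>2$ supplying the non-vanishing denominator in each direction.
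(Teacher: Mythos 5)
Your necessity direction is exactly the paper's own proof: from the transformation formula $(3.7)$, the hypothesis $R^{h}_{ijk}=K^{h}_{ijk}$ gives $\delta_{j}^{h}\pi_{ik}-\delta_{i}^{h}\pi_{jk}+\pi_{j}^{h}g_{ik}-\pi_{i}^{h}g_{jk}=0$ (the paper's $(3.22)$), contracting $h$ with $i$ gives $(2-\ell)\pi_{jk}-\alpha g_{jk}=0$ (the paper's $(3.23)$), and tracing with $g^{jk}$ together with $\ell>2$ forces $\alpha=0$. That half matches the paper step for step and is correct.

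The genuine gap is in your converse, and your proposed repair does not close it. The identity $(2-\ell)\pi_{jk}=\alpha g_{jk}$ is a \emph{consequence} of the hypothesis $R^{h}_{ijk}=K^{h}_{ijk}$, so it is simply not available when you assume only $\alpha=0$; and your suggested substitute --- verifying ``that the four-term combination can only vanish when $\pi_{ij}$ is pure trace'' --- is again a statement conditional on the vanishing of that combination, i.e.\ it re-proves necessity and yields nothing in the direction you need. In fact $\alpha=0$ alone cannot suffice: in an orthonormal horizontal frame, choosing $h=j$ with $i,j,k$ pairwise distinct (possible since $\ell\geq 3$) reduces the correction term in $(3.7)$ to the single component $\pi_{ik}$, which the one scalar $\alpha$ does not control; a nonzero $1$-form satisfying $\nabla_{i}\pi^{i}=-\frac{\ell-2}{2}\pi_{h}\pi^{h}$ has $\alpha=0$ while generically $\pi_{ij}\neq 0$, hence $R^{h}_{ijk}\neq K^{h}_{ijk}$. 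What the contraction argument actually establishes is the stronger, cleaner equivalence: $R^{h}_{ijk}=K^{h}_{ijk}$ if and only if the full characteristic tensor $\pi_{ij}$ vanishes --- necessity gives $\pi_{jk}=\frac{\alpha}{2-\ell}g_{jk}$ and then $\alpha=0$, hence $\pi_{jk}\equiv 0$, and sufficiency is immediate from $(3.7)$. To be fair, the paper itself offers no argument for the converse either (it says only ``the converse is also true''), so the circularity you correctly sensed is real and is inherited from stating the condition in terms of the trace $\alpha$ rather than the tensor $\pi_{ij}$; but as written, your sufficiency step would fail, and no amount of tightening the trace computation can rescue it.
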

A geometric characteristic of Theorem 3.4 is the connection
transformations from sub-Riemannian connection $\nabla$ to
semi-sub-Riemannnian connection $D$ conserve the Schouten curvature
tensor invariant under certain conditions.

 Now  we consider the case of $R^{h}_{ijk}=0,$ that
is, there hold
\begin{equation}
K^{h}_{ijk}=\delta_{i}^{h}\pi_{jk}-\delta_{j}^{h}\pi_{ik}+\pi_{i}^{h}g_{jk}-\pi_{j}^{h}g_{ik},
\end{equation}
let $j=h=e$, we obtain
\begin{equation}
K^{e}_{iek}=(2-\ell)\pi_{ik}-\alpha g_{ik},
\end{equation}
Multiplying the equation $(3.25)$ by $g^{ik}$ we get
\begin{equation}
K=K^{e}_{iek} g^{ik}=2(1-\ell)\alpha, \nonumber\\
\end{equation}
So we have
\begin{equation}
\alpha=\frac{K}{2(1-\ell)},
\end{equation}

Substituting $(3.26)$ into $(3.25)$, we get
\begin{equation}
\pi_{ik}=\frac{1}{2-\ell}(K^{e}_{iek}-\frac{K}{2(\ell-1)}g_{ik}),
\end{equation}
Similarly, we substitute $(3.27)$ into $(3.24)$, we have
\begin{eqnarray}\label{326}
K^{h}_{ijk}&=&-\frac{1}{\ell-2}(\delta_{i}^{h}K^{e}_{jek}-\delta_{j}^{h}K^{e}_{iek}+g_{jk}K^{e}_{ief}g^{fh}-g_{ik}K^{e}_{jef}g^{fh})
\nonumber\\
 &+& \frac{K}{(\ell-2)(\ell-1)}(g_{jk}\delta_{i}^{h}-g_{ik}\delta_{j}^{h})
\end{eqnarray}
By using $(3.17)$, equation (\ref{326}) is equivalent to
$S^{h}_{ijk}=0$. This implies the following
\begin{thm}\label{thm35}
The sub-Riemannian manifold $(M,V_{0},g)$ associated with a
semi-sub-Riemannnian connection D is flat (i.e. $R^{h}_{ijk}=0$) if
and only if the tensor $S^{h}_{ijk}$, defined by $(3.17)$,  of
sub-Riemannian connection $\nabla$ is vanishing and
$\pi_{ik}=\frac{1}{2-\ell}(K^{e}_{iek}-\frac{K}{2(\ell-1)}g_{ik})$.
\end{thm}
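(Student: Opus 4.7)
The plan is to prove both directions of the equivalence by exploiting equation $(3.7)$, which relates $R^h_{ijk}$ and $K^h_{ijk}$ through the characteristic tensor $\pi_{ik}$, together with Theorem \ref{thm31} (invariance of $S^h_{ijk}$ under $\nabla \to D$).

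For the forward implication, I would suppose $R^{h}_{ijk}=0$. Equation $(3.7)$ then immediately gives
\begin{eqnarray*}
K^{h}_{ijk}=\delta_{i}^{h}\pi_{jk}-\delta_{j}^{h}\pi_{ik}+\pi_{i}^{h}g_{jk}-\pi_{j}^{h}g_{ik},
\end{eqnarray*}
which is exactly $(3.24)$. The manipulations $(3.25)$--$(3.28)$ already present in the paper then furnish both the required expression $\pi_{ik}=\frac{1}{2-\ell}(K^{e}_{iek}-\frac{K}{2(\ell-1)}g_{ik})$ and, after reinsertion into $(3.24)$, equation $(3.28)$. Comparing $(3.28)$ with the definition $(3.17)$ of $S^{h}_{ijk}$ shows that $S^{h}_{ijk}=0$. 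This direction is essentially a bookkeeping exercise assembling steps the paper has already carried out.

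For the converse, suppose $S^{h}_{ijk}=0$ and $\pi_{ik}=\frac{1}{2-\ell}(K^{e}_{iek}-\frac{K}{2(\ell-1)}g_{ik})$. The key calculation is to compute the trace $\alpha=g^{ik}\pi_{ik}$, yielding
\begin{eqnarray*}
\alpha=\frac{1}{2-\ell}\left(K-\frac{K\ell}{2(\ell-1)}\right)=\frac{K}{2(1-\ell)}.
\end{eqnarray*}
Substituting this $\alpha$ and the hypothesis on $\pi_{ik}$ into the trace relation $(3.11)$, namely $R^{e}_{iek}=K^{e}_{iek}+(\ell-2)\pi_{ik}+\alpha g_{ik}$, the three terms telescope to give $R^{e}_{iek}=0$. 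Then $(3.12)$ yields $R=K+2(\ell-1)\alpha=0$.

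Now I invoke Theorem \ref{thm31}: $\bar{S}^{h}_{ijk}=S^{h}_{ijk}=0$. Using the definition of $\bar{S}^{h}_{ijk}$,
\begin{eqnarray*}
R^{h}_{ijk}=\frac{1}{\ell-2}\{\delta_{j}^{h}R^{e}_{iek}-\delta_{i}^{h}R^{e}_{jek}+g_{ik}g^{fh}R^{e}_{jef}-g_{jk}g^{fh}R^{e}_{ief}\}-\frac{R}{(\ell-1)(\ell-2)}(g_{ik}\delta_{j}^{h}-g_{jk}\delta_{i}^{h}),
\end{eqnarray*}
and every term on the right vanishes since $R^{e}_{iek}=0$ and $R=0$. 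Thus $R^{h}_{ijk}=0$. The main subtlety (not really an obstacle) is recognizing that Theorem \ref{thm31} supplies exactly the bridge needed to convert the vanishing of the $\nabla$-side invariant $S^{h}_{ijk}$ into a vanishing identity for $R^{h}_{ijk}$, once the Ricci-type trace $R^{e}_{iek}$ and scalar $R$ are shown to vanish; everything else is contraction and substitution.
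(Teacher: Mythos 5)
Your proposal is correct, and while your forward direction coincides with the paper's, your converse takes a genuinely different route. For the necessity the paper, in the text immediately preceding the theorem, derives $(3.24)$--$(3.28)$ and observes that $(3.28)$ is precisely $S^{h}_{ijk}=0$ by $(3.17)$ --- exactly the bookkeeping you describe, so there is nothing to compare there. For the sufficiency, however, the paper argues through the Weyl conformal tensor: after obtaining $R^{e}_{iek}=0$ from the hypothesis on $\pi_{ik}$ (as you do), it appeals to ``the first Bianchi identity'' to get $R^{e}_{ike}=R^{e}_{kei}-R^{e}_{iek}=0$, then uses relation $(3.21)$ together with $\bar{S}^{h}_{ijk}=S^{h}_{ijk}$ to conclude $\bar{C}^{h}_{ijk}=0$, and finally solves the definition $(3.18)$ of $\bar{C}^{h}_{ijk}$ for $R^{h}_{ijk}$, whose remaining terms are traces that vanish. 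The delicate point in that route is the Bianchi step: identities $(2.24)$--$(2.25)$ were established for the torsion-free connection $\nabla$, whereas $D$ has torsion, so invoking them for $R$ needs justification (it can be repaired: contracting $(3.7)$ over $h=e$, $k=e$ gives $R^{e}_{ije}=K^{e}_{ije}$, and taking the trace $h=k$ in $S^{h}_{ijk}=0$ forces $K^{e}_{ije}=0$). Your argument sidesteps this entirely, because $\bar{S}^{h}_{ijk}$ is built only from $R^{h}_{ijk}$, the Ricci-type traces $R^{e}_{iek}$, $R^{e}_{jef}g^{fh}$, and the scalar $R$: once you have $R^{e}_{iek}=0$ and $R=K+2(\ell-1)\alpha=0$ (a step the paper leaves implicit but needs), Theorem \ref{thm31} gives $R^{h}_{ijk}=\bar{S}^{h}_{ijk}=S^{h}_{ijk}=0$ in one line. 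In short, the paper's detour through $\bar{C}^{h}_{ijk}$ yields the side observation that the conformal tensor and $\bar{S}$ coincide under these hypotheses, while your route is shorter, uses only the invariant $\bar{S}$ itself, and is insensitive to the torsion of $D$.
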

\begin{proof}
Here just to prove the sufficiency. If
$\pi_{ik}=\frac{1}{2-\ell}(K^{e}_{iek}-\frac{K}{2(\ell-1)}g_{ik})$,
then $\alpha=\frac{K}{2(1-\ell)}$, so
$K^{e}_{iek}=(2-\ell)\pi_{ik}-\alpha g_{ik}$, and
\begin{equation}\label{2}
R^{e}_{iek}=K^{e}_{iek}+(\ell-2)\pi_{ik}+\alpha g_{ik}=0\nonumber\\
\end{equation}
By the first Bianchi identity, we know
\begin{equation}\label{2}
R^{e}_{ike}=R^{e}_{kei}-R^{e}_{iek}=0,\nonumber\\
\end{equation}
and
\begin{eqnarray*}
\bar{C}^{h}_{ijk}&=&\bar{S}^{h}_{ijk}+\frac{1}{\ell(\ell-2)}(\delta_{j}^{h}R^{e}_{ike}
-\delta_{i}^{h}R^{e}_{jke}+g_{ik}R^{e}_{jfe}g^{fh}-g_{jk}R^{e}_{ife}g^{fh})+\frac{1}{\ell}\delta_{k}^{h}R^{e}_{ije}\\
&=&S^{h}_{ijk}\\
&=&0
\end{eqnarray*}
 Therefore, we have
\begin{eqnarray*}
R^{h}_{ijk}&=&\bar{C}^{h}_{ijk}+\frac{1}{\ell-2}\{\delta_{j}^{h}(R^{e}_{iek}-\frac{1}{\ell}R^{e}_{ike}-\frac{R}{2(\ell-1)}g_{ik})\\
&&+\delta_{i}^{h}(R^{e}_{jek}-\frac{1}{\ell}R^{e}_{jke}-\frac{R}{2(\ell-1)}g_{jk})\\
&&-g_{ik}(R^{e}_{jef}g^{fh}-\frac{1}{\ell}R^{e}_{jfe}g^{fh}-\frac{R}{2(\ell-1)}\delta_{j}^{h}
)\\
&&+g_{jk}(R^{e}_{ief}g^{fh}-\frac{1}{\ell}R^{e}_{ife}g^{fh}-\frac{R}{2(\ell-1)}\delta_{i}^{h})\}\\
&&-\frac{1}{\ell}\delta_{k}^{h}R^{e}_{ije}\\
&=&0.
\end{eqnarray*}
This completes the proof of Theorem \ref{thm35}.
\end{proof}

We now assume that, for any $X,Y,Z\in V_{0}$, there are
\begin{eqnarray*}
&&R(X,Y)Z=0,\\
&&(\nabla_{X}T)(Y,Z)=0.
\end{eqnarray*}
A manifold satisfying these two conditions is called a group
manifold with respect to $\nabla$.
\begin{examp}
Carnot group G is a group manifold with respect to $\nabla$ defined
by $(2.9)$.
\end{examp}

 In fact, let $X=X^{i}X_{i}, Y=Y^{j}X_{j},Z=Z^{k}X_{k}$, and by $(2.9)$
and $(2.11)$, then the horizontal curvature tensor can be given
exactly as
\begin{eqnarray*}
K(X,Y)Z&=&\nabla_{X}\nabla_{Y}Z-\nabla_{Y}\nabla_{X}Z-\nabla_{[X,Y]_{0}}Z\\
&=&X^{i}X_{i}(Y^{j})X_{j}(Z^{k})X_{k}+Y^{j}X^{i}X_{i}X_{j}(Z^{k})X_{k}-Y^{j}X_{j}(X^{i})X_{i}(Z^{k})X_{k}\\
&&-Y^{j}X^{i}X_{j}X_{i}(Z^{k})X_{k}-X^{i}X_{i}(Y^{j})X_{j}(Z^{k})X_{k}+Y^{j}X_{j}(X^{i})X_{i}(Z^{k})X_{k}\\
&=&0,
\end{eqnarray*}
On the other hand, the horizontal torsion tensor of horizontal
vector fields of Z,Y is
\begin{eqnarray*}
T(Y,Z)&=&\nabla_{Y}Z-\nabla_{Z}Y-[Y,Z]_{0}\\
&=&Y^{j}X_{j}(Z^{k})X_{k}-Z^{k}X_{k}(Y^{j})X_{j}-Y^{j}X_{j}(Z^{k})X_{k}+Z^{k}X_{k}(Y^{j})X_{j}=0
\end{eqnarray*}
\begin{eqnarray*}
T(\nabla_{X}Y,Z)&=&\nabla_{\nabla_{X}Y}Z-\nabla_{Z}\nabla_{X}Y-[\nabla_{X}Y,Z]_{0}\\
&=&X(Y^{j})\nabla_{X_{j}}Z-Z(X(Y^{j}))X_{j}-X(Y^{j})\nabla_{Z}X_{j}-X(Y^{j})X_{j}(Z^{k})X_{k}\\
&&+Z(X(Y^{j}))X_{j}-X(Y_{j})[X_{j},Z]_{0}\\
&=&0,
\end{eqnarray*}
so one has
\begin{eqnarray*}
(\nabla_{X}T)(Y,Z)=\nabla_{X}T(Y,Z)-T(\nabla_{X}Y,Z)-T(Y,\nabla_{X}Z)=0.
\end{eqnarray*}

If $D$ is a semi-sub-Riemannian connection, then we have
\begin{equation}
(D_{X}T)(Y,Z)=0\rightleftharpoons (D_{X}\pi)(Z)Y-(D_{X}\pi)(Y)Z=0,
\end{equation}
In a local frame $\{e_{i}\}$, by taking $X=e_{i},Y=e_{j},Z=e_{k}$,
then we get
\begin{eqnarray*}\label{211}
0&=&\{D_{e_{i}}\pi_{k}-\pi(D_{e_{i}}e_{k})\}e_{j}-\{D_{e_{i}}\pi_{j}-\pi(D_{e_{i}}e_{j})\}e_{k}
\nonumber\\
 &=& \{e_{i}(\pi_{k})-\Gamma_{ik}^{e}\pi_{e}\}e_{j}-
 \{e_{i}(\pi_{j})-\Gamma_{ij}^{e}\pi_{e}\}e_{k},
\nonumber\\
\end{eqnarray*}
Thus we know
\begin{equation}
e_{i}(\pi_{j})-\Gamma_{ij}^{e}\pi_{e}=0,
\end{equation}
substituting $(3.4)$ into $(3.30)$ and using $(3.8)$ we deduce
\begin{equation}
\pi_{ij}=-\frac{1}{2}g_{ij}\pi_{e}\pi^{e},
\end{equation}
By virtue of Theorem 3.3, we have $W^{h}_{ijk}=\bar{W}^{h}_{ijk}$.
Since $\bar{R}^{h}_{ijk}=0$, we get $\bar{R}^{e}_{iek}=0$, then
$\bar{W}^{h}_{ijk}=0$, so $W^{h}_{ijk}=0$. This implies the
following
\begin{prop}
If sub-Riemnnian manifold $(M,V_{0},g)$ is a group manifold with
respect to the semi-sub-Riemannian connection $D$, then $M$ is
projective flat.
\end{prop}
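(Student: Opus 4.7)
The plan is to translate the two defining conditions of a group manifold with respect to $D$ into a single pointwise statement about the characteristic 1-form $\pi$, reduce to the hypothesis of Theorem 3.3, and then close the argument using the curvature condition. First I would expand $(D_X T)(Y,Z)=0$ using the semi-symmetric torsion formula $T(Y,Z)=\pi(Z)Y-\pi(Y)Z$; since $D$ is a derivation, this is equivalent to $(D_X\pi)(Z)\,Y-(D_X\pi)(Y)\,Z=0$, as recorded in (3.29). Choosing $Y,Z$ linearly independent in $V_0$ at a point shows that $(D_X\pi)(Y)=0$ for all horizontal $X,Y$, which in a frame $\{e_i\}$ becomes $e_i(\pi_j)-\Gamma_{ij}^e\pi_e=0$.

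The key algebraic step I would carry out next is to substitute the relation $\Gamma_{ij}^k=\{_{ij}^k\}+\delta_i^k\pi_j-g_{ij}\pi^k$ from (3.4) into this equation and collect terms using $\nabla_i\pi_j=e_i(\pi_j)-\{_{ij}^k\}\pi_k$. After matching against the definition (3.8) of the characteristic tensor $\pi_{ij}=\nabla_i\pi_j-\pi_i\pi_j+\frac{1}{2}g_{ij}\pi_h\pi^h$, the equation should collapse to the clean identity $\pi_{ij}=-\frac{1}{2}g_{ij}\pi_e\pi^e$. I expect this is the most delicate part of the proof, because one must be careful that the extra quadratic term $\pi_i\pi_j$ present in the definition of $\pi_{ij}$ cancels correctly with the $\delta_i^k\pi_j$ and $g_{ij}\pi^k$ contributions coming from $\Gamma_{ij}^k-\{_{ij}^k\}$; this is exactly why the final coefficient is $-\tfrac{1}{2}$ rather than something else.

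With $\pi_{ij}=\lambda\,g_{ij}$ for $\lambda=-\frac{1}{2}\pi_e\pi^e$ in hand, the characteristic tensor is proportional to the sub-Riemannian metric, so Theorem 3.3 immediately gives $\bar{W}^h_{ijk}=W^h_{ijk}$. To finish, I invoke the curvature half of the group-manifold hypothesis: $R^h_{ijk}=0$ forces the contraction $R^e_{iek}=0$, and substituting these vanishings into the definition (3.19) of $\bar{W}^h_{ijk}$ yields $\bar{W}^h_{ijk}=0$. Combining the two equalities produces $W^h_{ijk}=0$, which is precisely the statement that $(M,V_0,g)$ is projectively flat with respect to the sub-Riemannian connection $\nabla$. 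The overall structure is therefore very short once Theorem 3.3 is available, and the whole proof essentially reduces to verifying that the torsion-parallel condition forces the characteristic tensor to be a scalar multiple of the metric.
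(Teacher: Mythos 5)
Your proof is correct and follows essentially the same route as the paper's: you derive $e_i(\pi_j)-\Gamma_{ij}^e\pi_e=0$ from the parallel-torsion condition $(3.29)$--$(3.30)$, collapse it via $(3.4)$ and $(3.8)$ to $\pi_{ij}=-\frac{1}{2}g_{ij}\pi_e\pi^e$ as in $(3.31)$, and then combine Theorem 3.3 with $R^h_{ijk}=0$ (hence $R^e_{iek}=0$ and $\bar{W}^h_{ijk}=0$) to conclude $W^h_{ijk}=0$. The only cosmetic difference is that you argue pointwise with linearly independent $Y,Z$ (valid since $\ell>2$) where the paper evaluates on frame vectors $e_j,e_k$, which amounts to the same thing.
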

Then substituting $(3.31)$ into $(3.24)$ we get
\begin{equation}
K^{h}_{ijk}=\pi_{e}\pi^{e}(\delta_{j}^{h}g_{ik}-\delta_{i}^{h}g_{jk}).\nonumber\\
\end{equation}
It is not hard to see by a direct checking up on a few things that
the converse is also true, hence we obtain
\begin{thm}\label{thm36}
A sub-Riemannian manifold $(M,V_{0},g)$ with vanishing curvature
with respect to semi-sub-Riemannnian connection $D$ is a group
manifold if and only if $M$ is of constant curvature.
\end{thm}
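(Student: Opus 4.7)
The plan is to exploit the two identities already in hand: equation (3.24), which expresses $K^h_{ijk}$ in terms of the characteristic tensor $\pi_{ij}$ whenever $R^h_{ijk}=0$, and equation (3.31), $\pi_{ij}=-\tfrac{1}{2}g_{ij}\pi_e\pi^e$, which the paper has shown to be equivalent to the vanishing of $(D_XT)(Y,Z)$ under the standing hypothesis $R^h_{ijk}=0$. Since the hypothesis ``vanishing curvature with respect to $D$'' is assumed on both sides of the iff, the definition of group manifold reduces in our setting to the single condition $(D_XT)(Y,Z)=0$.

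For the necessity direction (group manifold $\Rightarrow$ constant curvature), I would unpack the group-manifold hypothesis in a local horizontal frame $\{e_i\}$ to obtain $(3.29)$–$(3.30)$, and then substitute $\Gamma^k_{ij}$ from $(3.4)$ together with the definition $(3.8)$ of $\pi_{ij}$ to arrive at $(3.31)$. Plugging $\pi_{ij}=-\tfrac{1}{2}g_{ij}\pi_e\pi^e$ (so that $\pi_i^{\,h}=-\tfrac{1}{2}\delta_i^{\,h}\pi_e\pi^e$) into $(3.24)$ collapses the four terms on the right-hand side into
\[
K^{h}_{ijk}=\pi_e\pi^e\bigl(\delta_j^{\,h}g_{ik}-\delta_i^{\,h}g_{jk}\bigr),
\]
which is precisely the constant-sectional-curvature form with sectional scalar $\pi_e\pi^e$.

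For the sufficiency direction, I would start from $K^{h}_{ijk}=\lambda(\delta_j^{\,h}g_{ik}-\delta_i^{\,h}g_{jk})$ for some scalar $\lambda$, together with the assumed identity $(3.24)$ coming from $R^h_{ijk}=0$. Equating these two expressions for $K^h_{ijk}$ and contracting on $i=h$, then on the remaining free indices with $g^{jk}$, would let me solve algebraically for $\pi_{ij}$ in terms of $\lambda$ and $\alpha=\pi^{\,i}_i$; the computation should force $\alpha=-\lambda\ell/2$ and hence $\pi_{ij}=-\tfrac{\lambda}{2}g_{ij}$. Taking the inner product $g^{ij}\pi_{ij}$ on both sides and using $(3.8)$ should then identify $\lambda$ with $\pi_e\pi^e$, producing exactly $(3.31)$ and hence $(D_XT)(Y,Z)=0$ via the equivalence already established in the text.

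The main obstacle I anticipate is precisely this last identification step in the sufficiency direction: the constant-curvature assumption only gives $\pi_{ij}$ proportional to $g_{ij}$, while $(3.31)$ requires the proportionality constant to equal $-\tfrac{1}{2}\pi_e\pi^e$. One needs to use the definition $(3.8)$ of $\pi_{ij}$ (not merely the contracted traces) to relate the proportionality constant $\lambda$ to the norm $\pi_e\pi^e$, or to observe that once $\pi_{ij}=\lambda g_{ij}/(-2)$ is fixed, $(3.31)$ is equivalent to $\lambda=\pi_e\pi^e$, which is consistent with the value forced by the trace. All other steps are direct substitutions into formulas already derived in Section~3.
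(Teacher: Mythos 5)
Your necessity direction coincides with the paper's own proof: the paper likewise unpacks the group-manifold hypothesis into $(3.29)$--$(3.30)$, substitutes $(3.4)$ and $(3.8)$ to reach $(3.31)$, and inserts $(3.31)$ into $(3.24)$ to obtain $K^{h}_{ijk}=\pi_{e}\pi^{e}(\delta_{j}^{h}g_{ik}-\delta_{i}^{h}g_{jk})$. That half is fine.

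The genuine gap is in your sufficiency direction, at exactly the step you flagged, and it is not repairable by the means you suggest. Your contractions are correct: equating $K^{h}_{ijk}=\lambda(\delta_{j}^{h}g_{ik}-\delta_{i}^{h}g_{jk})$ with $(3.24)$ does force $\alpha=-\lambda\ell/2$ and $\pi_{ij}=-\tfrac{\lambda}{2}g_{ij}$. But the definition $(3.8)$ does not ``identify $\lambda$ with $\pi_{e}\pi^{e}$'': combining $(3.8)$ with $\pi_{ij}=-\tfrac{\lambda}{2}g_{ij}$ yields only the first-order system $\nabla_{i}\pi_{j}=\pi_{i}\pi_{j}-\tfrac{1}{2}(\lambda+\pi_{e}\pi^{e})g_{ij}$, and your fallback remark --- that $(3.31)$ is equivalent to $\lambda=\pi_{e}\pi^{e}$, ``which is consistent with the value forced by the trace'' --- is circular: $(3.31)$ is precisely what has to be proved, and consistency is not derivation. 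In fact $\lambda=\pi_{e}\pi^{e}$ is not a consequence of the hypotheses at all. Already in the flat model of these formulas ($K^{h}_{ijk}=0$, so $\lambda=0$), the choice $\pi_{j}=-2x_{j}/|x|^{2}$ satisfies $\nabla_{i}\pi_{j}-\pi_{i}\pi_{j}+\tfrac{1}{2}g_{ij}\pi_{e}\pi^{e}=0$, hence $\pi_{ij}=0$ and, by $(3.7)$, $R^{h}_{ijk}=0$; yet $\pi_{e}\pi^{e}=4/|x|^{2}\neq 0$, so $(3.31)$ fails and $DT\neq 0$, although the space is of constant (zero) curvature and $D$ is flat. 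Equivalently, the system above gives only $\nabla_{i}(\pi_{e}\pi^{e})=(\pi_{e}\pi^{e}-\lambda)\pi_{i}$, so $\pi_{e}\pi^{e}\equiv\lambda$ is consistent but never forced. To be fair, the paper itself supplies no argument here --- it dismisses the converse with ``a direct checking up on a few things'' --- so your proposal has faithfully reproduced the paper's forward direction and exposed a gap that is present in the paper as well. The converse goes through only under a stronger reading: either ``constant curvature'' is taken with the specific constant $\pi_{e}\pi^{e}$, in which case your contraction algebra immediately yields $\pi_{ij}=-\tfrac{1}{2}\pi_{e}\pi^{e}g_{ij}$, i.e., $(3.31)$; or the statement is recast existentially in Yano's manner --- a manifold of constant curvature admits \emph{some} semi-sub-Riemannian connection making it a group manifold --- which requires actually constructing $\pi$, not just tracing $(3.24)$.
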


\section{ Acknowledgments}

I would like to thank Professor P.B. Zhao who encouraged me to study
the semi-symmetric connection on sub-Riemannian manifolds, for very
useful discussions and valuable suggestion. Both authors would like
to thank Professor X.P. Yang for his encouragement and help!


\begin{thebibliography}{s20}



\bibitem[1]{AC} Agache, N.S., Chafle, M.R.: A semi-symmetric non-metric
connection on a Riemannian manifold. Indian J. Pure Appl. Math. {\bf
23}(6), 399-409(1992)

\bibitem[2]{B} Bellaiche, A.: The tangent space in sub-Riemannian geometry. Progr. Math.
{\bf 144},1-84(1996)

\bibitem[3]{CL} Capogna, L., Lin, F.H.: Legendrian energy minimizers, Part I: Heisenberg group target. Cal. Var.
{\bf 12}(2),145-171(2001)

\bibitem[4]{CLG}  Cantrijn, F., Langerock, B.: Generalized connections over a vector bundle map. Differential
Geom. Appl. {\bf 18}(3), 295-317(2003)

\bibitem[5]{DB} De, U.C., Biswas, S.C.: On a type of semi-symmetric
non-metric connection on a Riemannian manifold. Istanbul Univ.
Mat.Derg. {\bf 55/56}, 237-243(1996/1997)

\bibitem[6]{DK} De, U.C., Kamila, D.: On a type of semi-symmetric
non-metric connection on a Riemannian manifold. J. Indian Inst.Sci.
{\bf 75}, 707-710(1995)

\bibitem[7]{DGN} Danielli, D., Garofalo, N., Nhieu, D.M.: Minimal surfaces, surfaces of constant
mean curvature and isoperimetry in Carnot groups,  2001 Preprint

\bibitem[8]{DGN2} Danielli, D., Garofalo, N., Nhieu, D.M.: Sub-Riemannian
calculus on hypersurfaces in Carnot groups. Adv. Math. {\bf 215},
292-378(2007)

\bibitem[9]{FSC} Franchi, B.,Serapioni, R., Cassano, F.S.: Rectifiability and perimeter in
Heisenberg group. Math. Ann. {\bf 321}(3), 479-531(2001)

\bibitem[10]{FS2} Friedmann, A., Schouten, J. A. \"Uber die Gecmetrie
der Halbsymmerischen \"Ubertragung, Math. Z. {\bf 21}(1924)211-233

\bibitem[11]{FYZ} Fu, F.Y., Yang, X.P., Zhao, P.B.: Geometrical and physical
characteristics of a class conformal mapping. J. Geom. Phys. {\bf
62}(6), 1467-1479(2012)

\bibitem[12]{GN} Garofalo, N., Nhieu, D.M.: Isoperimetric and Sobolev inequalities for Carnot-
Caratheodory spaces and the existence of minimal surfaces. Comm.
Pure Appl. Math. {\bf 49}, 1081-1144(1996)


\bibitem[13]{JX} Jost, J.,  Xu, C.J.: Sub-elliptic harmonic maps, Amer. Math. Soc.
{\bf 350}(11), 4611-4649(1998)

\bibitem[14]{Li} Liang, Y.X.: Some properties of the semi-symmetric
metric connection. J. of Xiamen University (Natural Science). {\bf
30}(1), 22-24(1991)

\bibitem[15]{LS}  Liu, W.S., Sussmann, H.J.: Shortest paths for Sub-Riemannian Metrics on
Rank-two Distribution. Mem. Amer. Math. Soc. {\bf 118}(1995)

\bibitem[16]{TY} Tan, K.H., Yang, X.P.: On some sub-Riemannian objects of hypersurfaces in
sub-Riemannian manifolds, Bull. Austral. Math. Soc. {\bf 10},
177-198(2004)


\bibitem[17]{Ya} Yano, K.: On semi-symmetric metric connection, Rev. Roum.
Math. Pureset Appl. {\bf 15}, 1579-1586(1970)

\bibitem[18]{ZJ} Zhao, P.B.,  Jiao, L.: Conformal transformations on
Carnot Caratheodory spaces, Nihonkal Mathematical Journal. {\bf
17}(2), 167-185(2006)

\bibitem[19]{Zh1} Zhao, P.B.: The invariant of projective transformation of semi-symmetric metric-
recurrent connections and curvature tensor expression,  Journal of
Engineering Mathematics. {\bf 17}(1), 105-108(2000)

\bibitem[20]{Zh2} Zhao, P.B., Song,H. Z.: An invariant of the
projective semi-symmetric connection. Chinese Quarterly J. of Math.
{\bf 17}(4), 48-52(2001)

\bibitem[21]{Zh3} Zhao, P.B.: Some properties of
projective semi-symmetric connections,  International Mathematical
Forum. {\bf 3}(7), 341-347(2008)

\bibitem[22]{Zh4} Zhao, P.B.: On F-semi-symmetric connection and
F-curvature tensor, J.  of University of Science and Technology of
Suzhou. {\bf 21}(5), 1-7(2004)


\bibitem[23]{ZSY} Zhao, P.B.,  Song, H.Z.,  Yang, X.P.: Some invariant properties of the
semi-symmetric metric recurrent connection and curvature tensor
expressions. Chinese Quarterly J. of Math. {\bf 19}(4),
355-361(2004)

\bibitem[24]{ZL} Zhao, P.B., Shangguan, L.X.: On semi-symmetric
connection, J. of Henan Normal University(Natural Science). {\bf
19}(4), 13-16(1994)




\end{thebibliography}
\end{document}